\newtheorem{theorem}{Theorem}
\newtheorem{corollary}[theorem]{Corollary}
\newtheorem{proposition}[theorem]{Proposition}
\DeclareMathOperator{\res}{Res}
\DeclareMathOperator{\ind}{Ind}
\DeclareMathOperator{\ints}{\mathbb{Z}}
\DeclareMathOperator{\rep}{\bold{Rep-}}
\DeclareMathOperator{\gr}{\text{Groth}}
\begin{document}

\title{Induction/Restriction Bialgebras for Restricted Wreath Products}\author{Seth Shelley-Abrahamson}\date{August 2014}\maketitle

\begin{abstract} To a finite group $G$ one can associate a tower of wreath products $S_n \rtimes G^n$.  It is well known that the graded direct sum of the Grothendieck groups of the categories of finite dimensional complex representations of these groups can be given the structure of a graded Hopf algebra, and in fact a positive self-adjoint Hopf algebra in the sense of Zelevinsky [1], using the induction product and restriction coproduct.  This paper introduces and explores an analogously defined algebra/coalgebra structure associated to a more general class of towers of groups, obtained as a certain family of subgroups of wreath products in the case $G$ is abelian.  We call these groups restricted wreath products, and they include the infinite family of complex reflection groups $G(m, p, n)$.  It is known that in the case of full wreath products the associated Hopf algebra decomposes as a tensor power of the Hopf algebra of integral symmetric functions.  In the case of restricted wreath products, the associated algebra/coalgebra is no longer a Hopf algebra, but here we see that it contains an algebra containing every irreducible representation as a constituent and which is isomorphic to a tensor power of such an algebra/coalgebra associated to a smaller restricted wreath product, generalizing the tensor product decomposition for the full wreath products.  We closely follow the approach of [1].

\end{abstract}

\newpage

\tableofcontents

\section{Introduction} Let $G$ be a finite abelian group.  We may then construct the wreath product $S_n[G] := S_n \rtimes G^n$ as the group of monomial matrices with all nonzero entries in $G$.  As $G$ is abelian there is a surjection $S_n[G] \rightarrow G$ by taking the sum of the elements in $G$ appearing in a matrix.  For a subgroup $H \subset G$ let $G_n(G, H)$ denote the kernel of the composition $S_n[G] \rightarrow G \rightarrow G/H$, so that $G_n(G, G) = S_n[G]$ and $G_n(G, H)$ is the group of monomial matrices with entries in $G$ whose entries sum to an element of $H$.

Note that when $G$ is cyclic this construction yields the finite complex reflection groups in the family $G(m, p, n)$ where $p$ divides $m$.  Specifically, we have $G(m, p, n) = G_n(\ints/m, p\ints/m)$.

Let $R_0(G, H) = \ints$ and for $n > 0$ let $R_n(G, H) = K_0(\bold{Rep}-G_n(G, H))$ denote the Grothendieck group of the category of finite dimensional complex representations of $G_n(G, H)$.  We then construct the graded abelian group $$R(G, H) = \bigoplus_{n \geq 0} R_n(G, H)$$ which has a designated graded basis given by the isomorphism classes of irreducible representations (along with $1 \in \ints$ in degree 0) along with a graded nondegenerate symmetric bilinear form given by the usual pairing of representations.  This form will be denoted $\langle \cdot, \cdot \rangle$.  Note that the irreducible elements form a graded orthonormal basis for $R(G, H)$.

Using induction and restriction, one can place graded product and coproduct structures on $R(G, H)$.  In particular, we have an embedding of groups $G_k(G, H) \times G_l(G, H) \subset G_{k + l}(G, H)$ by the block-diagonal embedding of matrices, so we have an induction functor $$\ind : \rep(G_k(G, H) \times G_l(G, H)) \rightarrow \rep(G_{k + l}(G, H))$$ and a restriction functor $$\res: \rep(G_{k + l}(G, H)) \rightarrow \rep(G_k(G, H) \times G_l(G, H)).$$  These are exact functors, so we obtain maps at the level of Grothendieck groups: $$m_{k, l} : R_k(G, H) \otimes R_l(G, H) \rightarrow R_{k + l}(G, H)$$ $$m_{k, l}^*: R_{k + l}(G, H) \rightarrow R_k(G, H) \otimes R_l(G, H)$$ in view of the natural isomorphism $$R_k(G, H) \otimes R_l(G, H) \cong \gr(\rep(G_k(G, H) \times G_l(G, H))).$$  For $k = 0$ or $l = 0$ just set $m_{k, l}$ and $m^*_{k, l}$ to be the maps given by the natural isomorphism $R \otimes \ints \cong R$.  Set $$m = m_{G, H} =  \sum_{k, l \geq 0} m_{k, l} : R(G, H) \otimes R(G, H) \rightarrow R(G, H)$$ $$m^* = m^*_{G, H} = \sum_{k, l \geq 0} m_{k, l}^* : R(G, H) \rightarrow R(G, H) \otimes R(G, H).$$  It is immediate that $m_{G, H}$ gives $R(G, H)$ the structure of a graded commutative algebra with unit and that $m_{G, H}^*$ gives $R(G, H)$ the structure of a graded cocommutative coalgebra with counit.  Furthermore, by Frobenius reciprocity $m_{G, H}$ and $m^*_{G, H}$ are adjoint operators with respect to the inner product on $R(G, H)$ and the induced graded inner product on $R(G, H) \otimes R(G, H)$.  As they arise from functors, they send irreducible elements to nonzero sums of irreducible elements with nonnegative coefficients.

\section{A Tensor Product Subalgebra} In this section we will construct a natural positive injective map of algebras $$\Phi: \bigotimes_{l \in H^*} R(G/H, 1) \hookrightarrow R(G, H)$$ and we will see a weak form of surjectivity in the sense that every irreducible element in $R(G, H)$ occurs as a constituent of some element of the image of this map.  For $H = G$, the case of usual wreath products, $R(G/G, 1) = R(1, 1)$ is the Hopf algebra of integral symmetric functions and the injection above is the usual isomorphism of Hopf algebras known in that case.

Let $\phi \colon G_n(G, H) \rightarrow G_n(G/H, 1)$ be the map given by reducing the matrix entries mod $H$.  This gives rise to an exact sequence $$0 \rightarrow H^n \rightarrow G_n(G, H) \rightarrow G_n(G/H, 1) \rightarrow 0$$ where the first map is the diagonal embedding.  We obtain an additive functor $\phi^* : \rep(G_n(G/H, 1)) \rightarrow \rep(G_n(G, H))$ by pullback, which gives rise to a graded operator $\phi^* \colon R(G/H, 1) \rightarrow R(G, H)$.  As $\phi$ is surjective this map sends distinct irreducibles to distinct irreducibles, and in particular is an embedding of graded free abelian groups.

For $l \in H^*$, $l^{\otimes n}$ is a linear character of $H^n$ centralized by $G_n(G, H)$, so the action can be extended to all of $G_n(G, H)$ trivially with respect to the above exact sequence.  We then obtain an additive functor $\tau_l \colon \rep(G_n(G, H)) \rightarrow \rep(G_n(G, H))$ by tensoring with $l^{\otimes n}$, giving rise to a graded operator $\tau_l$ on $R(G, H)$.  These operators have several nice properties.  We see $\tau_l \circ \tau_{l'} = \tau_{ll'}$.  In view of the inner product on $R(G, H)$ in terms of characters, we see $\tau_l^* = \tau_{\bar{l}} = \tau_{l^{-1}} = \tau_l^{-1}$, so $\tau_l$ is an orthogonal operator.  It is clear that $\tau_l$ is a map of coalgebras, but then $\tau_l^* = \tau_{\bar{l}}$ is also a map of coalgebras, so since the form on $R(G, H)$ is nondegenerate we conclude $\tau_l$ is also a map of algebras.  In summary, the rule $l \mapsto \tau_l$ gives an action of $H^*$ on $R(G, H)$ by positive orthogonal bialgebra automorphisms.

For $l \in H^*$, set $\Phi_l = \tau_l \circ \phi^* \colon R(G/H, 1) \rightarrow R(G, H)$.  We then have

\begin{proposition} $\Phi_l$ is an injective map of bialgebras sending irreducibles to irreducibles. \end{proposition}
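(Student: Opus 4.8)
The plan is to reduce the entire statement to the single assertion that $\phi^*$ is a map of bialgebras, and then to obtain that by checking the induction and restriction compatibilities separately. Since $l \mapsto \tau_l$ has already been shown to be an action of $H^*$ by positive orthogonal bialgebra automorphisms, $\tau_l$ is in particular a bialgebra isomorphism; and because $\tau_l$ is tensoring by the one-dimensional character $l^{\otimes n}$, it carries irreducibles to irreducibles. We have also already recorded that $\phi^*$ is an embedding of graded free abelian groups sending distinct irreducibles to distinct irreducibles. Granting for the moment that $\phi^*$ is a bialgebra map, it follows that $\Phi_l = \tau_l \circ \phi^*$ is injective (a composite of injections), sends irreducibles to irreducibles (a composite of two such maps), and is a bialgebra map (a composite of bialgebra maps). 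Compatibility with units and counits is immediate from the grading, since $\phi^*$ is graded and is the identity in degree $0$. Thus everything comes down to showing $\phi^*$ respects the product $m$ and the coproduct $m^*$.

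For the remainder write $A_n = G_n(G,H)$, $\bar A_n = G_n(G/H,1)$, $K = A_k \times A_l$, $\bar K = \bar A_k \times \bar A_l$, so that $m_{k,l}$ is induced by $\ind^{A_{k+l}}_K$ and $m^*_{k,l}$ by $\res^{A_{k+l}}_K$ (and similarly with bars), and note that reduction mod $H$ satisfies $\phi_{k+l}|_K = \phi_k \times \phi_l$ with the inclusion $K \hookrightarrow A_{k+l}$ lying over $\bar K \hookrightarrow \bar A_{k+l}$. The coproduct compatibility is then purely formal: restriction is pullback along an inclusion, so functoriality of pullback applied to this commuting square of groups gives $\res^{A_{k+l}}_K \circ \phi_{k+l}^* = (\phi_k \times \phi_l)^* \circ \res^{\bar A_{k+l}}_{\bar K}$. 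Identifying $(\phi_k \times \phi_l)^*$ with $\phi_k^* \boxtimes \phi_l^*$ on external tensor products, this is exactly the statement that $m^*_{k,l} \circ \phi^* = (\phi^* \otimes \phi^*) \circ m^*_{k,l}$, so $\phi^*$ is a coalgebra map.

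The substantive point is the product compatibility, and I expect the real work to lie here. One cannot simply dualize the coalgebra statement: although $m$ and $m^*$ are adjoint, the adjoint of $\phi^*$ is the functor of $H^n$-invariants rather than $\phi^*$ itself, so the two compatibilities are logically independent and induction must be treated directly. The key observation is that the block-diagonal subgroup is the \emph{full} $\phi$-preimage of its image, i.e. $\phi_{k+l}^{-1}(\bar K) = K$. This one checks from the monomial-matrix description: a matrix over $G$ reduces mod $H$ to a block-diagonal matrix with each block summing to $0$ in $G/H$ precisely when it is itself block-diagonal with each block summing into $H$. Consequently the square with horizontal arrows the block-diagonal inclusions and vertical arrows $\phi_{k+l}$ and $\phi_k \times \phi_l$ is cartesian, and $\phi_{k+l}$ restricts to a surjection onto $\bar K$. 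In this situation the Mackey/base-change formula furnishes a canonical isomorphism $\phi_{k+l}^* \circ \ind^{\bar A_{k+l}}_{\bar K} \cong \ind^{A_{k+l}}_K \circ (\phi_k \times \phi_l)^*$ of functors; concretely, choosing coset representatives of $A_{k+l}/K$ lying over coset representatives of $\bar A_{k+l}/\bar K$ --- possible exactly because the preimage identity makes the two coset spaces agree under $\phi_{k+l}$ --- one matches the two induced modules block by block, the inner $K$-twisting being compatible because the $K$-action on $(\phi_k \times \phi_l)^*(V \boxtimes W)$ factors through $\bar K$. Rewriting this through $(\phi_k \times \phi_l)^* = \phi_k^* \boxtimes \phi_l^*$ gives $\phi^* \circ m_{k,l} = m_{k,l} \circ (\phi^* \otimes \phi^*)$, so $\phi^*$ is an algebra map as well.

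Together these show $\phi^*$ is a bialgebra map, and composing with the bialgebra automorphism $\tau_l$ finishes the proof. The main obstacle is the product compatibility, and the crucial input within it is the preimage identity $\phi_{k+l}^{-1}(\bar K) = K$: it is exactly this that makes the relevant square cartesian and powers the base-change isomorphism, after which the coset bookkeeping and the factoring of the $K$-action through $\bar K$ are routine.
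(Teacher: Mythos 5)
Your proposal is correct and follows essentially the same route as the paper: reduce everything to $\phi^*$ being a bialgebra map, note the coalgebra compatibility is formal, and handle the algebra compatibility via the identification of the coset spaces $G_{k+l}(G,H)/(G_k(G,H)\times G_l(G,H))$ and $G_{k+l}(G/H,1)/(G_k(G/H,1)\times G_l(G/H,1))$. Your preimage identity $\phi_{k+l}^{-1}(\bar K)=K$ is exactly what underlies the paper's asserted bijection of coset spaces, and your base-change argument just spells out what the paper compresses into ``follows from the definition of induced characters.''
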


\begin{proof} For the first statement, in view of the preceding comments about $\tau_l$ and $\phi^*$ we need only check that $\phi^*$ is a map of bialgebras.  It is obvious that $\phi^*$ is a map of coalgebras, and to establish that it is a map of algebras we need to check that the diagram $$\begin{diagram} R(G/H, 1) \otimes R(G/H, 1) &\rTo^{\phi^* \otimes \phi^*}& R(G, H) \otimes R(G, H)\\\dTo^{m_{G/H, 1}}&&\dTo^{m_{G, H}}\\R(G/H, 1) &\rTo^{\phi^*}&R(G, H)\end{diagram}$$ commutes.  For this just note that $\phi$ induces a bijection on the coset spaces $G_{k + l}(G, H)/(G_k(G, H) \times G_l(G, H))$ and $G_{k + l}(G/H, 1)/(G_k(G/H, 1) \times G_l(G/H, 1))$, and the commutativity of the diagram then follows immediately from the definition of induced characters. \end{proof}

\begin{proposition}  The sub-bialgebra $\Phi_l(R(G/H, 1))$ of $R(G, H)$ has a graded basis whose degree $n$ part consists of the isomorphism classes of those irreducible representations $\pi$ of $G_n(G, H)$ whose restriction to $H^n$ contains the irreducible constituent $l^{\otimes n}$.\end{proposition}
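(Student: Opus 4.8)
The plan is to exhibit the claimed set as the image under $\Phi_l$ of the distinguished irreducible basis of $R(G/H,1)$. By the previous proposition $\Phi_l$ is injective and carries irreducibles to irreducibles, so in degree $n$ the sub-bialgebra $\Phi_l(R(G/H,1))$ has graded basis $\{\Phi_l(\beta)\}$ as $\beta$ ranges over the isomorphism classes of irreducibles of $G_n(G/H,1)$. It therefore suffices to prove the set equality
$$\{\Phi_l(\beta) : \beta \in \irr(G_n(G/H,1))\} = \{\pi \in \irr(G_n(G,H)) : \langle \pi|_{H^n},\, l^{\otimes n}\rangle > 0\}.$$
Two structural facts drive the argument. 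First, the exact sequence of the setup exhibits $H^n$ as an abelian normal subgroup of $G_n(G,H)$ with quotient $G_n(G/H,1)$ via $\phi$; since $G$ is abelian, the conjugation action of $G_n(G,H)$ on $H^n$ factors through the quotient $S_n$ permuting the $n$ tensor factors, and as $l^{\otimes n}$ is symmetric it is fixed by all of $G_n(G,H)$. Second, $\tau_l$ is tensoring by a linear character $\tilde l$ of $G_n(G,H)$ whose restriction to $H^n$ is $l^{\otimes n}$.

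The inclusion $\subseteq$ is immediate: $\phi^*\beta$ is trivial on $H^n = \ker\phi$, so its restriction to $H^n$ is a multiple of the trivial character, and tensoring with $\tilde l$ shows that $(\Phi_l\beta)|_{H^n} = (\tilde l \otimes \phi^*\beta)|_{H^n}$ is a multiple of $l^{\otimes n}$; in particular it contains $l^{\otimes n}$. For the reverse inclusion, let $\pi$ be an irreducible of $G_n(G,H)$ with $l^{\otimes n}$ occurring in $\pi|_{H^n}$. Because $l^{\otimes n}$ is $G_n(G,H)$-invariant, Clifford's theorem forces $\pi|_{H^n}$ to be $l^{\otimes n}$-isotypic, the conjugation orbit of $l^{\otimes n}$ being a single point. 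Applying $\tau_{l^{-1}} = \tau_l^{-1}$, the representation $\tau_{l^{-1}}\pi = \overline{\tilde l}\otimes \pi$ restricts on $H^n$ to a multiple of the product $(l^{-1})^{\otimes n}\cdot l^{\otimes n}$, which is trivial; hence $\tau_{l^{-1}}\pi$ is trivial on $H^n$ and so is inflated along $\phi$, giving $\tau_{l^{-1}}\pi = \phi^*\beta$ for a unique irreducible $\beta$ of $G_n(G/H,1)$. Then $\pi = \tau_l\phi^*\beta = \Phi_l(\beta)$, as desired.

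The main obstacle is the reverse inclusion, and within it the crucial point is the $G_n(G,H)$-invariance of $l^{\otimes n}$: this is exactly where abelianness of $G$ enters, collapsing the conjugation orbit to a single point so that Clifford's theorem yields full isotypy rather than merely a sum over a nontrivial orbit. Once isotypy is in hand, the orthogonal automorphism $\tau_{l^{-1}}$ transports $\pi$ into the subcategory of representations inflated from the quotient $G_n(G/H,1)$, and the conclusion follows formally. This step is in effect the specialization of Gallagher's extension of the Clifford correspondence to the present invariant, linearly-extendable character $l^{\otimes n}$, so the only genuine work is verifying the two structural facts recorded above.
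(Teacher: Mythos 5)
Your proposal is correct and follows essentially the same route as the paper: the forward inclusion from the construction of $\Phi_l$, then for the converse the $G_n(G,H)$-invariance of $l^{\otimes n}$ forcing $\pi|_{H^n}$ to be $l^{\otimes n}$-isotypic, so that $\tau_{l^{-1}}\pi$ is trivial on $H^n$ and descends along $\phi$. You merely make explicit (via Clifford's theorem and the fact that conjugation factors through $S_n$) what the paper states as ``the $l^{\otimes n}$-isotypic piece is a $G_n(G,H)$-submodule.''
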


\begin{proof} From the construction and the previous proposition that $\Phi_l(R(G/H, 1))$, we need only check that any such $[\pi]$ is in the image of $\Phi_l$.  Note that the $l^{\otimes n}$-isotypic piece of $\pi|_{H^n}$ is actually a submodule for $G_n(G, H)$, so $\tau_l^{-1}\pi$ is an irreducible representation of $G_n(G, H)$ with trivial $H^n$-action, so has the structure of an irreducible $G_n(G/H, 1) = G_n(G, H)/H^n$-representation $\pi'$.  But then $\tau_{l}^{-1}\pi = \phi^*\pi'$ so $\pi = \Phi_l(\pi)$, as needed.\end{proof}

\begin{proposition}  The sub-bialgebras $\Phi_l(R(G/H, 1))$ are pairwise orthogonal. \end{proposition}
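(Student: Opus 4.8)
The plan is to reduce the orthogonality of the two sub-bialgebras to a disjointness statement about their graded bases, and then to extract that disjointness from Clifford theory applied to the normal subgroup $H^n$. Since the bilinear form on $R(G, H)$ is graded and makes the isomorphism classes of irreducibles an orthonormal basis, two graded subgroups are orthogonal exactly when, in each fixed degree $n$, the sets of irreducibles occurring in them are disjoint. So the whole statement amounts to showing that for distinct $l, l' \in H^*$ no irreducible representation $\pi$ of $G_n(G, H)$ can belong to the spanning sets of both $\Phi_l(R(G/H, 1))$ and $\Phi_{l'}(R(G/H, 1))$.

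First I would invoke the preceding proposition, which identifies the degree $n$ part of $\Phi_l(R(G/H, 1))$ with the set of those irreducibles $\pi$ whose restriction to $H^n$ contains the constituent $l^{\otimes n}$. The essential structural input is the fact already recorded above that $l^{\otimes n}$ is centralized by $G_n(G, H)$, i.e.\ fixed by the conjugation action of $G_n(G, H)$ on the characters of $H^n$. By Clifford's theorem, the restriction $\pi|_{H^n}$ of any irreducible $\pi$ is a direct sum of the $G_n(G, H)$-conjugates of a single irreducible character of $H^n$, all with equal multiplicity; the characters that occur thus form a single orbit under the induced action of $G_n(G, H)/H^n = G_n(G/H, 1)$. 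Since $l^{\otimes n}$ is a fixed point of this action, its orbit is a singleton, so whenever $l^{\otimes n}$ occurs in $\pi|_{H^n}$ the entire restriction must be $l^{\otimes n}$-isotypic.

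It then follows that the linear character $l$ is an invariant of $\pi$: its restriction to $H^n$ is a multiple of $l^{\otimes n}$, and distinct $l, l' \in H^*$ give distinct characters $l^{\otimes n} \neq l'^{\otimes n}$ of $H^n$, already differing on an element of the form $(h, 1, \dots, 1)$ for any $h$ with $l(h) \neq l'(h)$. Hence no single irreducible $\pi$ can have its $H^n$-restriction simultaneously $l^{\otimes n}$-isotypic and $l'^{\otimes n}$-isotypic, so the degree $n$ spanning sets of the two sub-bialgebras are disjoint. Running this over all $n$ and using that the form is graded and orthonormal on irreducibles gives the orthogonality of $\Phi_l(R(G/H, 1))$ and $\Phi_{l'}(R(G/H, 1))$, and hence pairwise orthogonality as $l$ ranges over $H^*$.

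The only delicate point I anticipate is the appeal to Clifford theory that forces the restriction to be isotypic; everything else is bookkeeping with the orthonormal basis. The subtlety to keep straight is that "centralized by $G_n(G, H)$" is a statement about the conjugation action on characters of $H^n$, not a pointwise condition — but this is precisely the hypothesis that allowed $l^{\otimes n}$ to be extended trivially to $G_n(G, H)$ in the first place, so the singleton-orbit conclusion, and with it the whole argument, is immediate.
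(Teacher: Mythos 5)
Your proof is correct and takes essentially the same route as the paper: both arguments reduce to the facts that an irreducible occurring in $\Phi_l(R(G/H,1))$ restricts to $H^n$ as a multiple of $l^{\otimes n}$, and that $l^{\otimes n} \neq l'^{\otimes n}$ when $l \neq l'$. The only cosmetic differences are that you invoke Clifford's theorem to upgrade ``contains $l^{\otimes n}$'' to ``is $l^{\otimes n}$-isotypic'' (the paper gets this from the observation, already used in the proof of Proposition 2, that the $l^{\otimes n}$-isotypic component is a $G_n(G,H)$-submodule), and that the paper finishes with the one-line bound $\langle \pi, \sigma\rangle_{G_n(G,H)} \leq \langle \pi, \sigma\rangle_{H^n} = \deg(\pi)\deg(\sigma)\langle l^{\otimes n}, l'^{\otimes n}\rangle_{H^n} = 0$ rather than your disjointness-of-bases bookkeeping.
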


\begin{proof} If $\pi$ and $\sigma$ are irreducible representations of $G_n(G, H)$ which are $l^{\otimes n}$-isotypic and $l'^{\otimes n}$-isotypic, respectively, for $l \neq l'$, then we have $$\langle \pi, \sigma \rangle_{G_n(G, H)} \leq \langle \pi, \sigma\rangle_{H^n} = \deg(\pi)\deg(\sigma)\langle l^{\otimes n}, l'^{\otimes n}\rangle_{H^n} = 0$$ so $\langle \pi, \sigma \rangle = 0$, and in view of the previous proposition our claim follows.\end{proof}

Now for $l \in H^*$ define the graded operator $\Psi_l \colon R(G, H) \rightarrow R(G/H, 1)$ on the degree $n$ part by operator associated to the additive functor $\Psi_l \colon \rep(G_n(G, H)) \rightarrow \rep(G_n(G/H, 1))$ defined by $\Psi_l(\pi) = \hom_{H^n}(l^{\otimes n}, \pi)$.  The $G_n(G/H, 1)$-action is given by $g.A \mapsto \tilde{g}A\tilde{g}^{-1}$ for $A \in \hom_{H^n}(l^{\otimes n}, \pi)$ and $\tilde{g} \in G_n(G, H)$ any lift of $g \in G_n(G/H, 1)$.  This map $g.A$ does not depend on the choice of lifting of $g$ because $A$ commutes with the action of $H^n$.  Clearly $g.A \in \hom_{H^n}(l^{\otimes n}, \pi)$ so $\Psi_l(\pi)$ is indeed a $G_n(G/H, 1)$-representation, and clearly $\Psi_l$ is an additive functor.

\begin{proposition} $\Psi_l$ is a left adjoint for $\Phi_l$ as functors, and in particular the operators $\Psi_l$ and $\Phi_l$ are adjoints on the level of the Grothendieck groups with respect to the bilinear form.  As this form is nondegenerate and $\Phi_l$ is a bialgebra homorphism, $\Psi_l$ too is a bialgebra homomorphism.\end{proposition}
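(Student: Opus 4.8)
The plan is to exhibit, for each $n$ and each pair $\pi \in \rep(G_n(G, H))$, $\rho \in \rep(G_n(G/H, 1))$, a natural isomorphism
$$\hom_{G_n(G/H, 1)}(\Psi_l(\pi), \rho) \cong \hom_{G_n(G, H)}(\pi, \Phi_l(\rho)),$$
which is precisely the assertion that $\Psi_l$ is left adjoint to $\Phi_l$. The key preliminary observation is to rewrite both functors in terms of the twist $\tau_l$ and the inflation $\phi^*$. Since $l^{\otimes n}$ is a linear character, the weight space $\hom_{H^n}(l^{\otimes n}, \pi)$ coincides with the space of $H^n$-invariants of $\tau_{l^{-1}}\pi$; that is, $\Psi_l(\pi) = (\tau_{l^{-1}}\pi)^{H^n}$ as a module over $G_n(G/H, 1) = G_n(G, H)/H^n$, while by definition $\Phi_l(\rho) = \tau_l(\phi^*\rho)$.

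First I would use that $\tau_l$ is an invertible functor with inverse $\tau_{l^{-1}}$ to move the twist across the $\hom$ on the right-hand side, obtaining
$$\hom_{G_n(G, H)}(\pi, \tau_l \phi^*\rho) \cong \hom_{G_n(G, H)}(\tau_{l^{-1}}\pi, \phi^*\rho).$$
Writing $\pi' = \tau_{l^{-1}}\pi$, it then suffices to establish
$$\hom_{G_n(G/H, 1)}((\pi')^{H^n}, \rho) \cong \hom_{G_n(G, H)}(\pi', \phi^*\rho).$$
This is the standard adjunction between inflation $\phi^*$ and its left adjoint, the $H^n$-coinvariants functor, together with the observation that in the semisimple (characteristic zero) setting coinvariants and invariants agree, so $(\pi')_{H^n} \cong (\pi')^{H^n}$ compatibly with the residual $G_n(G, H)/H^n$-action. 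Transporting the unit and counit of this adjunction back through $\tau_l$ produces the desired natural isomorphism.

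Passing to Grothendieck groups, the bilinear form is computed by $\langle [\pi], [\sigma]\rangle = \dim \hom(\pi, \sigma)$, so the adjunction isomorphism immediately gives $\langle \Psi_l x, y\rangle = \langle x, \Phi_l y\rangle$ for all $x \in R(G, H)$ and $y \in R(G/H, 1)$; that is, $\Psi_l$ is the adjoint of $\Phi_l$ with respect to the form. To conclude that $\Psi_l$ is a bialgebra homomorphism I would run the same transpose argument already used for $\tau_l$: because $m$ and $m^*$ are adjoint and the form is nondegenerate, taking adjoints converts the identity expressing that $\Phi_l$ is an algebra map into the identity $m^*_{G/H, 1}\Psi_l = (\Psi_l \otimes \Psi_l)m^*_{G, H}$ expressing that $\Psi_l$ is a coalgebra map, and dually converts the statement that $\Phi_l$ is a coalgebra map into the statement that $\Psi_l$ is an algebra map. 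Since $\Phi_l$ is a bialgebra homomorphism by the first proposition, $\Psi_l$ is simultaneously an algebra and a coalgebra map, hence a bialgebra homomorphism.

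I expect the main obstacle to be the bookkeeping in the second step: correctly identifying $\Psi_l$ as the \emph{left} (rather than right) adjoint, and verifying that the classical inflation/(co)invariants adjunction isomorphism is genuinely $G_n(G/H, 1)$-equivariant once the auxiliary twist $\tau_l$ has been stripped away and reinstated, so that the resulting natural isomorphism really is an isomorphism of $G_n(G/H, 1)$-homomorphism spaces rather than merely of vector spaces.
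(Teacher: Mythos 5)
Your argument is correct and amounts to a detailed unwinding of the paper's one-line proof ("this follows from tensor--hom adjunction"): you factor the adjunction through the equivalence $\tau_l$ and the inflation/(co)invariants adjunction for $G_n(G,H) \twoheadrightarrow G_n(G/H,1)$, using semisimplicity to identify invariants with coinvariants, which is exactly the content the paper leaves implicit. The passage to Grothendieck groups and the transpose argument for the bialgebra property likewise match what the paper asserts.
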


\begin{proof} This follows from tensor-hom adjunction.\end{proof} 

It is clear that $\Psi_l \circ \Phi_l$ is naturally isomorphic to the identity functor and that $\Phi_l \circ \Psi_l$ is naturally isomorphic to the functor $I_l$ on $\rep(G_n(G, H))$ given by projection to the $l^{\otimes n}$-isotypic piece for the $H^n$-action (recall this is always a $G_n(G, H)$-subrepresentation).  At the level of Grothendieck groups, we obtain:

\begin{proposition} $\Psi_l \circ \Phi_l \colon R(G/H, 1) \rightarrow R(G/H, 1)$ is the identity, and $$\Phi_l \circ \Psi_l \colon R(G, H) \rightarrow R(G, H)$$ is orthogonal projection onto the sub-bialgebra $\Phi_l(R(G/H, 1)).$\end{proposition}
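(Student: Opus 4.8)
The plan is to deduce both assertions from the two functorial natural isomorphisms recorded immediately before the proposition, together with the adjunction of the preceding proposition, by passing to the level of Grothendieck groups and then invoking a standard characterization of orthogonal projections. Throughout I would write $P = \Phi_l \circ \Psi_l$ and $W = \Phi_l(R(G/H, 1))$.

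For the first statement, I would use the basic fact that naturally isomorphic exact functors induce the same map on Grothendieck groups: they send each object to an isomorphic one, hence to the same class. Since $\Psi_l \circ \Phi_l$ is naturally isomorphic to the identity functor on $\rep(G_n(G/H, 1))$, the induced graded operator $\Psi_l \circ \Phi_l$ on $R(G/H, 1)$ is therefore the identity.

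For the second statement I would verify the three defining properties of the orthogonal projection onto $W$, carried out in this order. First, idempotency: using the relation $\Psi_l \circ \Phi_l = \id$ just established, $P^2 = \Phi_l \circ (\Psi_l \circ \Phi_l) \circ \Psi_l = \Phi_l \circ \Psi_l = P$. Second, self-adjointness: the previous proposition gives $\Phi_l^* = \Psi_l$ and $\Psi_l^* = \Phi_l$, whence $P^* = (\Phi_l \circ \Psi_l)^* = \Psi_l^* \circ \Phi_l^* = \Phi_l \circ \Psi_l = P$. Third, identification of the image: the image of $P$ is visibly contained in the image of $\Phi_l$, namely $W$, while for any $x = \Phi_l(y) \in W$ one has $P(x) = \Phi_l \circ (\Psi_l \circ \Phi_l)(y) = \Phi_l(y) = x$, so $P$ restricts to the identity on $W$ and hence has image exactly $W$.

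Finally, I would invoke the standard fact that an operator satisfying $P^2 = P = P^*$ on an inner product space is precisely the orthogonal projection onto its image, concluding that $P = \Phi_l \circ \Psi_l$ is orthogonal projection onto $W = \Phi_l(R(G/H, 1))$. I do not anticipate a genuine obstacle: the entire substance is the passage from the natural isomorphisms of functors to the corresponding identities of operators on Grothendieck groups, after which the argument is purely formal. The only point requiring slight care is to confirm that the \emph{orthogonality} of the projection genuinely reflects the bilinear form, and this is exactly what the adjointness of $\Phi_l$ and $\Psi_l$ supplies.
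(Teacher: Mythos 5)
Your proposal is correct, and the first half coincides with the paper's reasoning: the proposition is stated as an immediate consequence of the paragraph preceding it, where the natural isomorphism $\Psi_l \circ \Phi_l \cong \id$ is passed to Grothendieck groups. For the second half, however, you take a genuinely different route. The paper identifies $\Phi_l \circ \Psi_l$ with the concrete functor $I_l$ of projection onto the $l^{\otimes n}$-isotypic component of the $H^n$-action, and then reads off the orthogonal-projection statement from Proposition 2 (which describes $\Phi_l(R(G/H,1))$ as the span of exactly the irreducibles with nonzero $l^{\otimes n}$-isotypic part) together with the orthonormality of the irreducible basis. You instead ignore the functor $I_l$ entirely and argue formally: $P = \Phi_l \circ \Psi_l$ satisfies $P^2 = P$ by the first half, $P^* = P$ by the adjunction of Proposition 4, and has image exactly $\Phi_l(R(G/H,1))$ since $P$ restricts to the identity there; the standard characterization of orthogonal projections then applies (the relevant facts --- that $\langle Pz, w\rangle = \langle z, Pw\rangle = 0$ for $w \in \ker P$ and that $x = Px + (x - Px)$ splits $R(G,H)$ as $\operatorname{im} P \oplus \ker P$ --- hold over $\ints$ because the form is positive definite on the basis of irreducibles). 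Your argument is more economical in that it never needs the representation-theoretic description of the image, while the paper's version buys the explicit identification of the projection with isotypic decomposition, which is what is actually used later (e.g.\ in the weak surjectivity argument of Theorem 6). Both are complete; no gap.
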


We now define the map mentioned at the start of this section $$\Phi \colon \bigotimes_{l \in H^*} R(G/H, 1) \rightarrow R(G, H)$$ as the product of the maps $\Phi_l$.  Given an $|H^*|$-tuple $\lambda = (\lambda_l)_{l \in H^*}$ of nonnegative integers, let $l(\lambda)$ denote the number of nonzero parts.  Given irreducible representations $\pi_l$ of $G_{\lambda_l}(G/H, 1)$, let $\pi_\lambda = \bigotimes_{l \in H^*} \pi_l \in \bigotimes_{l \in H^*} R(G/H, 1).$

\begin{theorem} $\Phi$ is a graded, positive, injective map of algebras.  Every irreducible element of $R(G, H)$ occurs as a constituent of some element of the image of $\Phi$.  In particular, we have $$\langle \Phi(\pi_\lambda), \Phi(\sigma_\mu)\rangle = \delta_{\pi_\lambda, \sigma_\mu} [G : H]^{l(\lambda) - 1}.$$\end{theorem}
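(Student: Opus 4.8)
The plan is to reduce everything to the inner-product identity, from which the remaining assertions follow formally. That $\Phi$ is graded, positive, and a map of algebras is immediate from the construction: it is the product, taken in the commutative ring $R(G,H)$, of the graded positive algebra maps $\Phi_l$ of the first proposition, and commutativity guarantees that this product is a well-defined algebra homomorphism out of $\bigotimes_{l\in H^*} R(G/H,1)$. Granting the inner-product formula (read for $l(\lambda)\geq 1$, with the degree-zero unit handled separately), injectivity is then automatic: the images $\Phi(\pi_\lambda)$ of the distinguished tensor-product basis are pairwise orthogonal with nonzero self-pairings $[G:H]^{l(\lambda)-1}$, hence linearly independent over $\ints$. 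Weak surjectivity I would dispatch at the end by Clifford theory for $H^n\trianglelefteq G_n(G,H)$: any irreducible $\pi$ has $\pi|_{H^n}$ supported on a single orbit, a representative of which is conjugate to a ``colored'' character $\chi_\lambda$ obtained by recording the $H^*$-value carried by each coordinate; since the $\chi_\lambda$-isotypic modules over the stabilizer are built from the external products of the $\Phi_l(R(G/H,1))$, inducing exhibits $\pi$ as a constituent of some $\Phi(\pi_\lambda)$.

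The heart of the matter is the inner-product formula, which I would prove by Mackey's double-coset theorem. Writing $P_l=\Phi_l(\pi_l)$, $Q_l=\Phi_l(\sigma_l)$ and $K_\lambda=\prod_{l:\lambda_l>0} G_{\lambda_l}(G,H)$ block-diagonally inside $G_n(G,H)$ with $n=\sum_l\lambda_l$, associativity of the induction product gives $\Phi(\pi_\lambda)=\ind_{K_\lambda}^{G_n(G,H)}\bigl(\boxtimes_{l} P_l\bigr)$, and likewise for $\Phi(\sigma_\mu)$ with $K_\mu$. Mackey's formula expresses $\langle\Phi(\pi_\lambda),\Phi(\sigma_\mu)\rangle$ as a sum over $K_\lambda\backslash G_n(G,H)/K_\mu$ of inner products over the intersections $K_\lambda\cap{}^gK_\mu$.

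The first step is to prune this sum using the normal subgroup $N=H^n$, which lies in every $K_\lambda\cap{}^gK_\mu$. By the second proposition $P_l|_{H^{\lambda_l}}$ is a multiple of $l^{\otimes\lambda_l}$, so $\boxtimes_{l}P_l$ restricts to $N$ as a multiple of the colored character $\chi_\lambda$; a Mackey term can be nonzero only if ${}^g\chi_\mu=\chi_\lambda$. This forces the two color-multisets to agree, so the pairing vanishes unless $\lambda=\mu$, and it confines the surviving $g$ to the preimage $\widetilde K_\lambda$ of the Young subgroup $S_\lambda\subset S_n$. The per-block entry-sum map is a homomorphism identifying $\widetilde K_\lambda/K_\lambda$ with $\{(t_l)\in(G/H)^{l(\lambda)}:\sum_l t_l=0\}$, a group of order exactly $[G:H]^{l(\lambda)-1}$; this is where the claimed power of the index enters, and since $K_\lambda\trianglelefteq\widetilde K_\lambda$ each surviving double coset is a single coset represented by some $w\in\widetilde K_\lambda$.

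For such a $w$ one has $K_\lambda\cap{}^wK_\lambda=K_\lambda$, and because $w$ is block-diagonal the term factors as $\prod_l\langle P_l,{}^{w_l}Q_l\rangle_{G_{\lambda_l}(G,H)}$, where $w_l$ is the $l$-block of $w$. Evaluating these factors is the step I expect to be the main obstacle. Conjugation by $w_l$ descends through $\phi$ to conjugation by an element of $S_{\lambda_l}[G/H]$ of total $(G/H)$-sum $t_l$, that is, to the outer action of $t_l\in G/H$ on $\irr\bigl(G_{\lambda_l}(G/H,1)\bigr)$; since the twist $\hat l=l\circ(\text{sum})$ is conjugation-invariant, one obtains ${}^{w_l}Q_l=\Phi_l(t_l\cdot\sigma_l)$ and hence $\prod_l\langle\pi_l,t_l\cdot\sigma_l\rangle$. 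The entire identity therefore turns on controlling this $G/H$-action on the irreducibles of the smaller restricted wreath products: one must show that the resulting contributions assemble to $\delta_{\pi_\lambda,\sigma_\mu}[G:H]^{l(\lambda)-1}$, i.e. that for $\pi_\lambda=\sigma_\mu$ each of the $[G:H]^{l(\lambda)-1}$ cosets contributes $1$ while the cross-pairings cancel. Pinning down this conjugation action on the sub-bialgebras $\Phi_l(R(G/H,1))$ is the delicate point, and I would expect to spend the bulk of the effort there.
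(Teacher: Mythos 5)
Up to its final step your argument is the paper's argument: the formal properties of $\Phi$, the reduction of injectivity to the inner-product formula, the Mackey expansion, the pruning of double cosets by restricting to $H^n$, the identification of the surviving classes with $\widetilde K_\lambda/K_\lambda$ of order $[G:H]^{l(\lambda)-1}$, and the Clifford-theoretic weak surjectivity all match what the paper does. But you stop exactly where the proof must produce a number: you reduce each surviving class $(t_l)$ to $\prod_l\langle \pi_l, t_l\cdot\sigma_l\rangle$, note that one would need each such term to equal $\delta_{\pi_\lambda,\sigma_\mu}$, and defer the verification. That is a genuine gap, not a routine check: nothing you have established forces the outer action of $G/H$ on $\irr(G_{\lambda_l}(G/H,1))$ to be trivial. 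The paper closes this step in one sentence, asserting that any diagonal element of $G_{\lambda_l}(G,G)$ centralizes $\Phi_l(\sigma_l)$ --- which is word for word the claim you isolate as the delicate point.

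Your caution is warranted, because that claim fails in general and the stated formula fails with it. Take $G=\ints/4$ and $H=2\ints/4$, so $G/H\cong\ints/2$ and $G_2(G/H,1)=G(2,2,2)=\{\pm I,\pm T\}$ with $T$ the coordinate swap. Conjugation by $\diag(-1,1)$ fixes $\pm I$ and interchanges $T$ and $-T$, hence interchanges the two linear characters $\chi$ with $\chi(-I)=-1$. Now take $\lambda=(\lambda_1,\lambda_\epsilon)=(2,1)$, $\pi_1=\chi$ one of those two characters and $\pi_\epsilon$ trivial. The two surviving classes are $(t_1,t_\epsilon)=(0,0)$ and $(t_1,t_\epsilon)=(1,1)$; the second contributes $\langle\chi,\chi^{\diag(-1,1)}\rangle\cdot\langle\hat\epsilon,\hat\epsilon\rangle=0$ rather than $1$, and the full Mackey sum gives $\langle\Phi(\pi_\lambda),\Phi(\pi_\lambda)\rangle=1$, not $[G:H]^{l(\lambda)-1}=2$. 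So the contributions do not assemble as required: what the Mackey computation actually yields is $\langle\Phi(\pi_\lambda),\Phi(\sigma_\mu)\rangle=\delta_{\lambda\mu}\sum_{(t_l)}\prod_l\langle\pi_l,t_l\cdot\sigma_l\rangle$, which collapses to $\delta_{\pi_\lambda,\sigma_\mu}[G:H]^{l(\lambda)-1}$ only when every $\pi_l$ is fixed by the relevant $G/H$-action. Any completion of your proof (or repair of the theorem) has to confront that action directly --- for instance by restricting to such $\pi_\lambda$, or by replacing the right-hand side of the stated formula with the sum above; note that injectivity of $\Phi$ no longer follows formally once orthogonality of the $\Phi(\pi_\lambda)$ is lost.
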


\begin{proof} It follows from previous results that $\Phi$ is a positive graded map of algebras.  By positivity, showing the given inner product formula will imply injectivity, so we start there, which is just an application of Mackey's double coset formula.  In particular, we have $$\begin{array} {lcl}&&\langle \Phi(\pi_\lambda), \Phi(\sigma_\mu)\rangle\\&=&\langle \ind_{G_\lambda(G, H)}^{G_n(G, H)} \bigotimes_{l \in H^*} \Phi_l(\pi_l), \ind_{G_\mu(G, H)}^{G_n(G, H)}\bigotimes_{l \in H^*} \Phi_l(\sigma_l)\rangle_{G_n(G, H)}\\&=&\sum_{\gamma \in G_\lambda\backslash G_n/G_\mu} \langle \bigotimes_{l \in H^*} \pi_l, (\bigotimes_{l \in H^*} \sigma_l)^\gamma\rangle_{G_\lambda \cap\gamma G_\mu \gamma^{-1}} \end{array}$$ Noting $H^n \subset G_\lambda \cap \gamma G_\mu \gamma^{-1}$, we have the bound $$\left\langle \bigotimes_{l \in H^*} \pi_l, \left(\bigotimes_{l \in H^*} \sigma_l\right)^\gamma\right\rangle_{G_\lambda \cap\gamma G_\mu \gamma^{-1}}$$ $$\leq \prod_{l \in H^*} \deg(\pi_l)\deg(\sigma_l) \left\langle \bigotimes_{l \in H^*} l^{\otimes \lambda_l}, \left(\bigotimes_{l \in H^*} l^{\otimes \mu_i}\right)^\gamma\right\rangle_{H^n}$$  As $G^n$ centralizes $H^n$, twisting by $\gamma$ amounts to just twisting by some element of $S_n$, permuting the tensor factors, and hence the final inner product is $0$ unless both $\lambda = \mu$ and $\bar{\gamma} \in S_n$ stabilizes $\lambda$.  In this case we can clearly take $\gamma$ to be diagonal, so to compute the original inner product we need only sum over a collection of diagonal double coset representatives.  From the definition of $\Phi_l$, we see that any diagonal element of $G_{\lambda_i}(G, G)$ centralizes $\Phi_l(\sigma_l)$, and we conclude that each term of the above Mackey sum involving a diagonal representative yields a contribution of $1$ to the sum, as in that case $G_\lambda \cap \gamma G_\mu \gamma^{-1} = G_\lambda$ and it is just an inner product of an irreducible representation of $G_\lambda$ with itself.  The number of classes of the double coset space which have a diagonal representative is clearly $[G: H]^{l(\lambda) - 1}$ - indeed they are formed by a choice of element of $G/H$ for each nonzero $\lambda_i \times \lambda_i$ block such that the entire sum is $1 \in G/H$.

For our weak form of surjectivity, let $\pi \in \rep(G_n(G, H))$ be an irreducible representation.  The action of $S_n$ on $\pi$ induces an action of $S_n$ on the set of $H^n$-isotypic pieces by permuting the tensor factors, so we conclude some there is a nonzero $H^n$-isotypic piece of $\pi|_{H^n}$ of type $l_1^{\otimes \lambda_1} \otimes \cdots \otimes l_{|H|}^{\otimes \lambda_{|H|}}$. Then, $\pi|_{G_\lambda}$ contains some irreducible representation $\pi_1 \otimes \cdots \otimes \pi_{|H|}$ whose restriction $\pi_i|_{H^{\lambda_i}}$ contains $l_i^{\otimes \lambda_i}$.  By Proposition 2, we then have $\pi_i = \Phi_{l_i}(\pi_i')$ for some $\pi_i'$.  But then by Frobenius reciprocity $\pi$ is an irreducible constituent of $\Phi(\pi_\lambda)$, as needed.\end{proof}

Writing $$\Phi = m_{G, H}^{(|H^*|)} \circ \bigotimes_{l \in H^*} \Phi_l : \bigotimes_{l \in H^*} R(G/H, 1) \rightarrow R(G, H)$$ we also have the adjoint map $$\Psi = \bigotimes_{l \in H^*} \Psi_l \circ m_{G, H}^{*(|H^*|)} : R(G, H) \rightarrow \bigotimes_{l \in H^*} R(G/H, 1)$$ where $m_{G, H}^{(|H^*|)}$ and $m_{G, H}^{*(|H^*|)}$ denote iterated multiplication/comultiplication.  The first part of the previous theorem can then be recast as

\begin{corollary} $\Psi$ is a positive, graded, surjective map of coalgebras.  No positive element lies in its kernel. \end{corollary}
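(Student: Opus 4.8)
The plan is to treat the corollary as the pointwise dual of the Theorem, exploiting throughout that $\Psi$ is by construction the adjoint $\Phi^*$ with respect to the graded nondegenerate forms on $R(G,H)$ and on $\bigotimes_{l\in H^*}R(G/H,1)$. Here I use that the form on the tensor product is the tensor product of the forms, so that $\left(\bigotimes_l\Phi_l\right)^*=\bigotimes_l\Psi_l$, together with the fact that $m_{G,H}$ and $m_{G,H}^*$ are mutually adjoint, so that $\left(m_{G,H}^{(|H^*|)}\right)^*=m_{G,H}^{*(|H^*|)}$; these two facts give $\Psi=\Phi^*$. Granting this, the grading of $\Psi$ is immediate, since the adjoint of a graded map for a graded form is graded, and positivity follows directly from the definition of $\Psi$ as a composite of the restriction coproduct $m_{G,H}^{*(|H^*|)}$ with the additive functors $\Psi_l=\hom_{H^n}(l^{\otimes n},-)$, each of which carries representations to representations.

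For the coalgebra claim I would dualize the square expressing that $\Phi$ is an algebra map. Writing that square as $\Phi\circ m_\otimes=m_{G,H}\circ(\Phi\otimes\Phi)$, with $m_\otimes$ the multiplication on $\bigotimes_{l\in H^*}R(G/H,1)$, and applying $(-)^*$ via the two adjunction facts above, one obtains $m_\otimes^*\circ\Psi=(\Psi\otimes\Psi)\circ m_{G,H}^*$ together with the dual counit identity; as the forms are nondegenerate this is an equality of maps, i.e.\ $\Psi$ is a morphism of coalgebras. This step is clean and holds integrally.

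Next I would dispatch ``no positive element lies in the kernel,'' which is the exact dual of the weak surjectivity in the Theorem. Let $x$ be a nonzero positive element, with an irreducible constituent $\pi_0$ appearing with positive coefficient $c_{\pi_0}$. Weak surjectivity produces a basis element $\pi_\lambda$ with $\langle\pi_0,\Phi(\pi_\lambda)\rangle>0$, hence by adjointness $\langle\Psi(\pi_0),\pi_\lambda\rangle>0$. Since the $\pi_\lambda$ are orthonormal in $\bigotimes_{l\in H^*}R(G/H,1)$ and $\Psi$ is positive, the $\pi_\lambda$-coefficient of $\Psi(x)=\sum_\pi c_\pi\Psi(\pi)$ is a sum of nonnegative terms containing the strictly positive contribution $c_{\pi_0}\langle\Psi(\pi_0),\pi_\lambda\rangle$, so no cancellation can occur and $\Psi(x)\neq0$.

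The surjectivity is where the real work sits, and I expect it to be the main obstacle. The clean input is the composite $\Psi\circ\Phi$: by adjointness, the inner product formula of the Theorem, and orthonormality of the $\pi_\lambda$, one reads off $\Psi\circ\Phi(\pi_\lambda)=[G:H]^{l(\lambda)-1}\pi_\lambda$, so $\Psi\circ\Phi$ is diagonal on the basis $\{\pi_\lambda\}$ with nonzero eigenvalues. Rationally this finishes the job, since $\Phi\circ(\Psi\circ\Phi)^{-1}$ is then a section of $\Psi$. The difficulty is the integral statement: for $l(\lambda)>1$ the eigenvalue $[G:H]^{l(\lambda)-1}$ exceeds $1$, so $\Phi$ itself is not a $\mathbb{Z}$-splitting and one cannot simply invert $\Psi\circ\Phi$. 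To obtain surjectivity over $\mathbb{Z}$ I would work in each fixed degree and induct on $l(\lambda)$: the pure tensors ($l(\lambda)=1$) are hit exactly, as there the eigenvalue is $1$ and $\pi_\lambda=\Psi(\Phi_l(\pi'_l))$; for general $\lambda$ I would feed the irreducible $\pi$ produced in the weak-surjectivity argument (a constituent of $\Phi(\pi_\lambda)$ whose restriction to $H^n$ realizes the type $\lambda$) into $\Psi$ and compute that the $\lambda$-graded component of $\Psi(\pi)$ equals $\pi_\lambda$ plus correction terms. Showing that, for a well-chosen ordering of the basis refining $l(\lambda)$ and degree, these corrections are strictly lower — so that the resulting change-of-basis matrix is unitriangular with $\pm1$ diagonal and hence invertible over $\mathbb{Z}$ — is the delicate point. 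Failing that, one reads the corollary as the assertion that the induced rational (equivalently, torsion-cokernel) map is onto, which the diagonalization of $\Psi\circ\Phi$ settles at once.
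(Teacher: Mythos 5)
Your treatment of four of the five assertions is correct and is exactly the paper's (implicit) argument: the paper offers no proof beyond the remark that the corollary ``recasts'' Theorem 6, i.e.\ one dualizes through the identity $\Psi=\Phi^*$, which you establish correctly from Proposition 4 and the adjointness of $m_{G,H}$ and $m_{G,H}^*$. Gradedness, positivity, the coalgebra property (dualizing the algebra square, using nondegeneracy of the forms exactly as the paper does for $\tau_l$ in Section 2), and the deduction of ``no positive element lies in the kernel'' from weak surjectivity via $\langle\Psi(\pi_0),\pi_\lambda\rangle=\langle\pi_0,\Phi(\pi_\lambda)\rangle>0$ and the orthonormality of the $\pi_\lambda$ are all sound and need no change.

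On surjectivity you have located a genuine gap, and you should be aware that it is a gap in the paper as much as in your proposal: over $\ints$ the adjoint of an injective map between lattices with orthonormal bases is surjective only when the cokernel of the original map is torsion-free, and the computation $\Psi\circ\Phi(\pi_\lambda)=[G:H]^{l(\lambda)-1}\pi_\lambda$ shows this is not automatic once $l(\lambda)>1$. Your fallback (rational surjectivity) is all that the paper's one-line recasting actually delivers. Your proposed unitriangularity induction is not the most natural repair; a cleaner reduction is available. Since $\Psi(\pi)$ for $\pi$ irreducible of $H^n$-type $\lambda$ is supported entirely in the $\lambda$-component with coefficients $\langle\pi,\Phi(\sigma_\lambda)\rangle$, integral surjectivity is equivalent to the statement that for each $\lambda$ and each $\sigma_\lambda$ some irreducible constituent of $\Phi(\sigma_\lambda)$ occurs with multiplicity one and in no other $\Phi(\sigma'_\lambda)$; by Clifford theory for the stabilizer $T_\lambda$ of the character $\bigotimes_l l^{\otimes\lambda_l}$ of $H^n$ (a subgroup containing $G_\lambda(G,H)$ with abelian quotient $(G/H)^{l(\lambda)-1}$), this amounts to showing that the irreducible $\bigotimes_l\Phi_l(\sigma_l)$ of $G_\lambda(G,H)$ extends to $T_\lambda$, equivalently that $\ind_{G_\lambda(G,H)}^{T_\lambda}\bigotimes_l\Phi_l(\sigma_l)$ is multiplicity-free. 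Neither you nor the paper establishes this extendability, so as written the integral surjectivity claim remains unproved; either supply that Clifford-theoretic argument or weaken the conclusion to surjectivity after tensoring with $\mathbb{Q}$.
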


Note that in the case of usual wreath products, i.e. $G = H$, we have $[G : H] = 1$, so by the inner product formula in Theorem 6 we have that $\Phi$ sends irreducibles to irreducibles, and the weak surjectivity condition becomes usual surjectivity, so $\Phi$ is surjective and hence is a bijective isometry, so $\Phi^{-1} = \Phi^* = \Psi$.  But $\Psi$ is a map of coalgebras, so $\Phi = \Psi^{-1}$ is as well, and we obtain

\begin{corollary} For the case of usual wreath products, i.e. $G = H$, we have that $\Phi$ and $\Psi$ are mutually inverse and adjoint, positive (irreducible-to-irreducible), graded isomorphisms respecting both the algebra and coalgebra structures.  \end{corollary}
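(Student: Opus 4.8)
The plan is to read off everything from the inner product formula of Theorem 6 in the degenerate case $G = H$, where $[G : H] = 1$ and hence $[G:H]^{l(\lambda) - 1} = 1$ for every tuple $\lambda$. First I would note that the formula then specializes to $\langle \Phi(\pi_\lambda), \Phi(\sigma_\mu)\rangle = \delta_{\pi_\lambda, \sigma_\mu}$. Since the $\pi_\lambda$ form the distinguished orthonormal basis of $\bigotimes_{l \in H^*} R(G/H, 1)$, this says precisely that $\Phi$ carries an orthonormal basis to an orthonormal set, and so by linearity $\Phi$ is a graded isometry.

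Next I would combine the isometry with positivity. Because $\Phi$ arises from exact functors, each $\Phi(\pi_\lambda)$ is a nonnegative integral combination of irreducibles; as $\langle \Phi(\pi_\lambda), \Phi(\pi_\lambda)\rangle = 1$, the coefficients are nonnegative integers whose squares sum to $1$, forcing exactly one coefficient to equal $1$ and the rest to vanish. Hence each $\Phi(\pi_\lambda)$ is itself irreducible, and the $\delta$ in the formula shows distinct basis elements go to distinct irreducibles. The weak surjectivity clause of Theorem 6 guarantees that every irreducible of $R(G, H)$ occurs as a constituent of some $\Phi(\pi_\lambda)$; but each such image is now a single irreducible, so every irreducible of $R(G, H)$ equals some $\Phi(\pi_\lambda)$. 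Thus $\Phi$ restricts to a bijection between the graded orthonormal bases, i.e. it is a graded, positive, irreducible-to-irreducible isomorphism of free abelian groups.

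From here the remaining assertions are formal. As an isometry carrying one orthonormal basis bijectively onto another, $\Phi$ is orthogonal, so $\Phi^{-1} = \Phi^*$; since $\Psi$ was constructed as the adjoint of $\Phi$, we obtain $\Psi = \Phi^* = \Phi^{-1}$, whence $\Phi$ and $\Psi$ are mutually inverse and adjoint. I would then transport structure across this inverse: $\Phi$ is an algebra homomorphism by Theorem 6 and $\Psi$ is a coalgebra homomorphism by Corollary 7. Because the inverse of a coalgebra isomorphism is again a coalgebra isomorphism, $\Phi = \Psi^{-1}$ respects the coproduct, and dually $\Psi = \Phi^{-1}$ respects the product. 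Therefore both maps respect the algebra and coalgebra structures simultaneously, which is the claim.

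I do not expect a genuine obstacle, since the corollary is essentially the $[G:H] = 1$ reading of Theorem 6 together with positivity. The only step warranting care is the passage from weak to honest surjectivity: this works only because we first establish that each $\Phi(\pi_\lambda)$ is a single irreducible rather than merely a positive combination, so that ``occurs as a constituent'' and ``equals'' coincide.
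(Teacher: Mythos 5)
Your proposal is correct and follows essentially the same route as the paper: specialize the inner product formula of Theorem 6 at $[G:H]=1$, combine with positivity to get that $\Phi$ sends irreducibles to irreducibles, upgrade weak surjectivity to surjectivity, and then transport the coalgebra structure through $\Phi^{-1} = \Phi^* = \Psi$. You simply spell out in more detail the steps the paper compresses (in particular the norm-one-plus-positivity argument forcing each $\Phi(\pi_\lambda)$ to be a single irreducible), which is exactly the right care to take.
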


This case recovers the usual identification of $R(G, G)$ with the $|G|$-fold tensor power of the Hopf algebra of integral symmetric functions, as seen for example in [1].

\vfil\eject

\section {References}

1.  A. Zelevinsky.  \emph{Representations of Finite Classical Groups, A Hopf algebra approach,} volume 869 of \emph{Lecture Notes in Mathematics.} Springer-Verlag, Berlin, 1981.

\end{document}